\renewcommand{\deg}{\operatorname{deg}}
\newtheorem{thm}{Theorem}[section]
\newtheorem{lem}[thm]{Lemma}
\newtheorem{cor}[thm]{Corollary}
\theoremstyle{definition}
\title{Graphs in which the Maxine heuristic produces a maximum independent set}
\author{Benjamin Lantz\\
\small Department of Mathematics\\
\small University of Rhode Island\\
\small Kingston, RI 02881\\
\small \tt benjamin\_lantz@uri.edu}
\begin{document}
\maketitle
\begin{abstract}
The residue of a graph is the number of zeros left after iteratively applying the Havel-Hakimi algorithm to its degree sequence. Favaron, Mah\'{e}o, and Sacl\'{e} showed that the residue is a lower bound on the independence number.  The Maxine heuristic reduces a graph to an independent set of size $M$. It has been shown that given a graph $G$, $M$ is bounded between the independence number and the residue of a graph for any application of the Maxine heuristic. We improve upon a forbidden subgraph classification of graphs such that $M$ is equal to the independence number given by Barrus and Molnar in 2015.
\end{abstract}

\section{Introduction} \label{sec: intro}
We will be considering simple graphs and we will let $N(v)$ represent the neighborhood of a vertex $v$ in a graph, and let $u\sim v$ mean that $u$ and $v$ are adjacent in the graph. For such a graph $G$ and subset of vertices $U$ in the graph, let $G[U]$ be the induced subgraph on the set $U$. For a set of graphs $\mathcal{S}$, a graph $G$ is said to be $\mathcal{S}$-free, if no graph in $\mathcal{S}$ appear as an induced subgraph in $G$.

Given a degree sequence $d=(d_1,d_2,\ldots, d_n)$, an iterative step in the Havel-Hakimi algorithm, developed independently by Havel \cite{Havel} and Hakimi \cite{Hakimi}, reduces $d$ to $d^1=(d_2-1,d_3-1,\ldots,d_{d_1+1}-1,d_{d_1+2}\ldots, d_n)$. After reordering the vertices to be non-increasing, the algorithm iterates until no positive entries are present. The algorithm arose to determine when a degree sequence is graphic: that is a list of integers $d$ is graphic if and only if the Havel Hakimi algorithm terminates in a list of zeros. The number of these zeros is said to be the residue of the degree sequence, and the residue of a graph $G$, denoted $R(G)$, is the residue of the degree sequence of $G$. The residue is of interest because of its connection to the independence number of a graph, $\alpha(G)$. In 1988, the conjecture-making computer program Graffiti \cite{Fajtlowicz} proposed the following theorem,  

\begin{thm}
\cite{FavaronMaheoSacle} For every graph $G$, $R(G)\leq \alpha(G)$.

\end{thm}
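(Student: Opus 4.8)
The plan is to prove the slightly stronger statement that $R(d)\le\alpha(G)$ for every graphic sequence $d$ and \emph{every} realization $G$ of $d$; since the residue depends only on the degree sequence, this is equivalent to the theorem. I would proceed by induction on the number of edges, equivalently on $\sum_i d_i$. The base case, in which $d$ consists entirely of zeros, is immediate: the edgeless graph on $n$ vertices has $R=n=\alpha$. For the inductive step I would fix a maximum-degree vertex $v$ with $\deg(v)=\Delta=d_1\ge 1$ and let $d'$ be the sequence obtained from $d$ by one Havel--Hakimi iteration. Two facts drive the argument: first, $R(d')=R(d)$, since the residue is by definition the number of terminal zeros and is unaffected by performing a single reduction step; and second, $\sum_i d'_i=\sum_i d_i-2\Delta$, so every realization of $d'$ has strictly fewer edges and the induction hypothesis applies to it.

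With these in hand, the theorem reduces to the following constructive lemma, which is the technical heart of the proof: given any realization $G$ of $d$ with maximum-degree vertex $v$, there is a realization $H$ of $d'$ with $\alpha(H)\le\alpha(G)$. Granting the lemma, the induction hypothesis gives $R(d')\le\alpha(H)$, and chaining the inequalities yields $R(d)=R(d')\le\alpha(H)\le\alpha(G)$, as desired.

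To build $H$ I would invoke the standard fact, underlying the Havel--Hakimi theorem itself, that $d'$ is realized by making $v$ adjacent to exactly the $\Delta$ vertices of largest degree in $V(G)\setminus\{v\}$. Concretely, a sequence of $2$-switches, each replacing edges $\{vw,xu\}$ by $\{vu,xw\}$ where $\deg(u)\ge\deg(w)$, $v\sim w$, $v\not\sim u$, $x\sim u$, and $x\not\sim w$, transforms $G$ into a realization $\widehat{G}$ of $d$ in which $N(v)$ is precisely this top-$\Delta$ set. Setting $H:=\widehat{G}-v$ then gives a realization of $d'$, and since deleting a vertex cannot increase the independence number, $\alpha(H)\le\alpha(\widehat{G})$.

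The main obstacle is exactly this switching step, because a $2$-switch can in principle raise the independence number, so $\alpha(\widehat{G})\le\alpha(G)$ is not automatic. I expect the crux of the whole argument to be showing that the switches can be carried out without increasing $\alpha$. The approach I would take is to track a fixed maximum independent set of $G$ through the process one switch at a time and, exploiting the degree inequality $\deg(u)\ge\deg(w)$ governing each swap, exhibit at every stage an independent set of the new graph of size at least that of the old one. Verifying that such a substitution always exists, by analyzing the cases according to which of $v,w,x,u$ lie in the tracked set, is the delicate step that would require the most care to complete.
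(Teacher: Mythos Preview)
The paper does not actually prove this theorem; it is quoted from \cite{FavaronMaheoSacle} as background, so there is no in-paper argument to compare yours against. I will therefore comment on the soundness of your sketch itself.

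Your inductive framework is reasonable: the reductions $R(d)=R(d')$ and ``induct on the number of edges'' are correct, and you have isolated the right lemma, namely that from any realization $G$ of $d$ one can produce a realization $H$ of $d'$ with $\alpha(H)\le\alpha(G)$. The problem is in your final paragraph, where the inequality runs the wrong way. You need $\alpha(\widehat{G})\le\alpha(G)$ so that $\alpha(H)=\alpha(\widehat{G}-v)\le\alpha(\widehat{G})\le\alpha(G)$. But ``tracking a fixed maximum independent set of $G$'' and ``exhibiting at every stage an independent set of the new graph of size at least that of the old one'' proves $\alpha(\widehat{G})\ge\alpha(G)$, which is useless here. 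What you would actually have to do is start from a maximum independent set of $\widehat{G}$ and push it \emph{backwards} through the switches to obtain one of the same size in $G$.

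Even with the direction corrected, this is more than a routine case check. A single $2$-switch can raise the independence number (for instance, switching $C_5$ to $K_3+K_2$ takes $\alpha$ from $2$ to $3$), so you genuinely need to exploit the constraint $\deg(u)\ge\deg(w)$, and it is not clear from your outline how that degree inequality interacts with membership in an independent set to block such increases. The proofs in the literature sidestep this difficulty altogether: Griggs and Kleitman, for example, prove instead the purely degree-sequence lemma that deleting \emph{any} vertex of maximum degree from $G$ yields a graph $G'$ with $R(G')\ge R(G)$, and then iterate the Maxine heuristic to reach an edgeless graph on $M\le\alpha(G)$ vertices whose residue is $M$. That route never needs to compare independence numbers across $2$-switches, which is exactly the obstacle your sketch leaves open.
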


 This result was proven by Favaron et. al. in 1991 and improved upon by Griggs and Kleitman \cite{GriggsKleitman}, Triesch \cite{Triesch}, and Jelen \cite{Jelen} in the 1990's.    Determining the independence number is NP-hard, but since it takes only $O(E)$ steps to determine the residue where $E$ is the number of edges in a graph, it is of interest to know how well $R(G)$ approximates $\alpha(G)$ and when is the bound realized. 

To further illustrate the relationship between the residue and the independence number, we can consider the Maxine heuristic, which is the process of iteratively deleting vertices of maximum degree until an independent set of vertices is realized \cite{GriggsKleitman}. We will call $M$ the size of the independent set achieved by the Maxine heuristic and note that this is clearly a lower bound on the independence number. Note that the heuristic depends on our choice of deleted vertices and $M$ can vary accordingly. It was shown by Griggs and Kleitman \cite{GriggsKleitman} that 

\begin{thm}(\cite{GriggsKleitman})  If $M$ is the size of the independent set produced by any application of the Maxine heuristic for a graph $G$, then $R(G)\leq M\leq \alpha(G)$. 
\end{thm}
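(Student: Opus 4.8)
The bound $M\le\alpha(G)$ is immediate: by construction the Maxine heuristic terminates at an independent set of $G$, which cannot be larger than a maximum independent set. The plan for the lower bound $R(G)\le M$ is an induction on the number of vertices, comparing a single Maxine deletion against a single Havel--Hakimi step. Fix a run of the heuristic, let $v$ be the first vertex it deletes (a vertex of maximum degree $\Delta$), and set $G'=G-v$. Since the remaining deletions of the run form a valid Maxine run on $G'$ with the same output, the value $M$ equals the value produced on $G'$, and the inductive hypothesis gives $R(G')\le M$. It therefore suffices to prove $R(G)\le R(G')$, the base case being a graph with no edges, where $R=M=n$.

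To compare $R(G)$ with $R(G')$ I would pass to degree sequences and use the majorization order: for non-increasing sequences $a,b$ of equal length and equal sum, write $a\preceq b$ when $\sum_{i=1}^{k}a_i\le\sum_{i=1}^{k}b_i$ for every $k$. Let $d=d(G)$ and let $d^{1}$ be its Havel--Hakimi reduct. Deleting $v$ removes the entry $\Delta$ and decrements the $\Delta$ entries corresponding to $N(v)$, whereas the Havel--Hakimi step removes $\Delta$ and decrements the $\Delta$ largest remaining entries; both operations subtract $2\Delta$ from the degree sum, so $d(G')$ and $d^{1}$ have equal length and equal sum. The first key step is the extremality of the Havel--Hakimi choice: decrementing the largest entries is the most balancing way to subtract $\Delta$ ones, so it produces the $\preceq$-minimum among all ways of deleting $\Delta$ from the top entry and decrementing $\Delta$ of the others. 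In particular $d^{1}\preceq d(G')$, since decrementing the entries at $N(v)$ is one such choice and hence leaves all partial sums at least as large.

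The second key step is that the residue is monotone with respect to $\preceq$: if $a\preceq b$ then $R(a)\le R(b)$. Granting this, and using that a single Havel--Hakimi step does not change the residue (so $R(d^{1})=R(d)=R(G)$ whenever $\Delta\ge 1$), I obtain
\[
R(G)=R(d^{1})\le R(d(G'))=R(G')\le M,
\]
which closes the induction and yields $R(G)\le M\le\alpha(G)$.

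The main obstacle is the monotonicity claim $a\preceq b\Rightarrow R(a)\le R(b)$. It is tempting to prove it by showing that $\preceq$ is preserved under a single Havel--Hakimi step and then reading off the residues as (length $-$ number of steps), but this approach fails: once $\max a<\max b$, one step subtracts different amounts from the two sums, the sums diverge, and the majorization relation can even reverse after a few steps (for instance $(1,1,1,1,1,1)\preceq(2,2,2,0,0,0)$, yet after two reductions the first dominates the second). A correct argument must instead control how the residue is assembled over the entire reduction, which is precisely the delicate structural behaviour of the residue under majorization analyzed in the work of Favaron et al.\ and of Griggs and Kleitman; establishing this monotonicity cleanly is where the real work lies.
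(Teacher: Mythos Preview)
The paper does not prove this theorem; it is stated as a background result with a citation to Griggs and Kleitman, and no argument is given. So there is no in-paper proof to compare your attempt against.

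On the substance of your proposal: the overall plan is the standard one and matches the approach in the cited literature. Inducting on $|V(G)|$, observing that a single Maxine deletion at a maximum-degree vertex $v$ turns the run into a Maxine run on $G'=G-v$, and then reducing to the inequality $R(G)\le R(G')$ is exactly how Griggs and Kleitman proceed. Your comparison of $d(G')$ with the Havel--Hakimi reduct $d^{1}$ via the dominance order, and the observation that decrementing the largest entries yields the $\preceq$-minimum, are correct and are the right first step.

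The gap you yourself flag is genuine and is the entire content of the theorem: the implication $a\preceq b\Rightarrow R(a)\le R(b)$ (for graphic sequences of equal length and sum) is true, but it is precisely the nontrivial lemma that Favaron--Mah\'eo--Sacl\'e and, in a sharper form, Griggs--Kleitman establish. You are right that one cannot simply push $\preceq$ through a single Havel--Hakimi step once the maxima differ; the actual proofs instead analyse how a single ``unit transfer'' $b\to b'$ (moving one unit from a larger entry to a smaller one) interacts with an entire Havel--Hakimi reduction, showing it can only decrease the residue. Without supplying that argument, what you have is an accurate outline rather than a proof.
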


Thus if $R(G)=\alpha(G)$ for some $G$, then every application of the Maxine heuristic must achieve a maximum independent set. 

A vertex in a graph is said to have the Havel-Hakimi property if it is of maximum degree and its neighbors are of maximal degree, i.e. the deletion of said vertex corresponds to the reduction in the degree sequence by one step of the Havel-Hakimi algorithm. Not every graph has a vertex with this property, but every degree sequence has a realization that has such a vertex \cite{ChartrandLesniakZhang}. If at each step of the Maxine heuristic, a vertex with the Havel-Hakimi property is deleted, then $R(G)=M$. To find when $M=\alpha(G)$ we will consider graphs with certain conditions.  

A vertex $v$ in a graph $G$ is said to have maximum degree-independence conditions (or MDI conditions) if it is has maximum degree and is a part of every maximum independent set. Also we will say that a graph $G$ has maximum degree-independence conditions (or MDI conditions), if there exists a vertex $v\in V(G)$ that has MDI conditions.

 In 2016, Barrus and Molnar found that if a vertex $v$ in $G$ has MDI conditions, then $G$ must contain an induced subgraph of $C_4$ (the cycle on 4 vertices) containing $v$ or an induced subgraph of $P_5$ (the path on 5 vertices) with $v$ as the center vertex \cite{BarrusMolnar}. From this it can be quickly shown that 

\begin{thm}\label{BMthm} (\cite{BarrusMolnar}) 
The Maxine heuristic always produces a maximum independent set when applied to a $\{C_4,P_5\}$-free graph. 
\end{thm}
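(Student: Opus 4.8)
The plan is to run the Maxine heuristic one deletion at a time and show that each deletion leaves the independence number unchanged, using the contrapositive of the Barrus--Molnar structural result on MDI vertices. The engine of the argument is a single-deletion observation: if $v$ is a vertex of maximum degree in a graph $H$ and $v$ does \emph{not} have MDI conditions in $H$, then $\alpha(H-v)=\alpha(H)$. Indeed, $\alpha(H-v)\le\alpha(H)$ always holds because every independent set of $H-v$ is independent in $H$; and since $v$ has maximum degree yet fails the MDI condition, $v$ must lie outside some maximum independent set $S$ of $H$, and this $S$ survives intact in $H-v$, giving $\alpha(H-v)\ge|S|=\alpha(H)$.

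Next I would record two structural facts. First, $\{C_4,P_5\}$-freeness is hereditary: if $H$ is $\{C_4,P_5\}$-free then so is $H-v$ for every $v$, since an induced $C_4$ or $P_5$ in $H-v$ would already occur in $H$. Second, by the contrapositive of the Barrus--Molnar result quoted above, a $\{C_4,P_5\}$-free graph contains \emph{no} vertex with MDI conditions: if some vertex $v$ had MDI conditions, the graph would contain an induced $C_4$ through $v$ or an induced $P_5$ centered at $v$, contradicting freeness.

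With these in hand the theorem follows by induction on the number of Maxine steps. Let $G$ be $\{C_4,P_5\}$-free and let $G=H_0,H_1,\dots,H_k$ be the sequence produced by an arbitrary application of the heuristic, where $H_{i+1}=H_i-v_i$ and $v_i$ is a chosen vertex of maximum degree in $H_i$. As long as $H_i$ still contains an edge, its maximum degree is at least $1$, the heuristic does not halt, and $H_i$ is $\{C_4,P_5\}$-free by heredity; hence $v_i$ has no MDI conditions, and the single-deletion observation gives $\alpha(H_{i+1})=\alpha(H_i)$. Chaining these equalities yields $\alpha(H_k)=\alpha(G)$. Since the heuristic terminates exactly when $H_k$ is an independent set, $\alpha(H_k)=|V(H_k)|=M$, and therefore $M=\alpha(G)$.

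I expect the argument to be largely bookkeeping once the Barrus--Molnar lemma is granted; the one place to be careful is the terminal case — ensuring that we only ever delete from a graph that genuinely has maximum degree at least $1$ (so that ``maximum degree'' is meaningful and the heuristic has not already stopped), and that the final edgeless graph is precisely the stopping condition equating $\alpha(H_k)$ with $M$. A secondary point worth making explicit is that the arbitrariness of the deleted vertex is harmless: because \emph{no} vertex of a $\{C_4,P_5\}$-free graph is MDI, preservation of $\alpha$ holds for whichever maximum-degree vertex the heuristic happens to select, so \emph{every} application attains the independence number.
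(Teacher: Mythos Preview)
Your proposal is correct and is exactly the argument the paper has in mind: the paper does not spell out a proof of Theorem~\ref{BMthm} but simply notes that it ``can be quickly shown'' from the Barrus--Molnar structural lemma (an MDI vertex forces an induced $C_4$ through it or an induced $P_5$ centered at it), and your induction on deletion steps---heredity of $\{C_4,P_5\}$-freeness, contrapositive of the lemma to rule out MDI vertices, and the single-deletion observation that $\alpha$ is preserved---is the standard way to cash this out. The bookkeeping about termination and arbitrariness of the chosen vertex is handled correctly.
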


\section{Results} \label{sec: results}

We will work to strengthen Theorem \ref{BMthm} by examining the case where $v$ with MDI conditions is in an induced copy of $C_4$, since $C_4$ does not have MDI conditions itself. Since we will only strengthen the condition on $C_4$, we will assume that all graphs considered have no subgraph isomorphic to $P_5$ in which the center vertex has MDI conditions. We will call a graph $P_5*$-free  when referring to the condition that the center vertex must have MDI conditions, as we will not restrict the existence of an induced $P_5$ in general.  We will allude to the aforementioned MDI conditions as the maximum degree condition and independence condition separately. To start, we will prove a few lemmas to reduce our search of induced subgraphs needed to strengthen the $C_4$ condition.

\begin{lem}\label{UniqueIndLemma}
    If $v\in V(G)$ has MDI conditions and is a part of more than one maximum independent set, then there is an induced subgraph of $G$ in which $v$ also has MDI conditions and there is only one maximum independent set. 
\end{lem}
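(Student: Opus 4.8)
The plan is to argue by induction on the number $k$ of maximum independent sets of $G$, deleting one carefully chosen vertex at a time so that each deletion strictly decreases $k$ while preserving everything $v$ is required to satisfy. Because $v$ has the independence condition, every maximum independent set of $G$ contains $v$; by hypothesis there are $k \ge 2$ of them. It therefore suffices to exhibit a single vertex whose deletion leaves $v$ with MDI conditions and drops the count below $k$, after which the induction hypothesis applied to the smaller graph completes the argument, the base case $k = 1$ being $G$ itself.

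For the inductive step I would fix two distinct maximum independent sets $I_1 \ne I_2$ and choose any $w \in I_1 \setminus I_2$. First I would record two facts: $w \ne v$ and $w \not\sim v$. Both follow from $v$ lying in every maximum independent set, since $v \in I_2$ forces $w \ne v$, and $v, w \in I_1$ together with the independence of $I_1$ forces $w \not\sim v$. Set $H = G - w$, an induced subgraph of $G$.

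Next I would verify that $v$ retains MDI conditions in $H$. Since $I_2 \subseteq V(H)$ is independent of size $\alpha(G)$ and $H$ is a subgraph, $\alpha(H) = \alpha(G)$, and the maximum independent sets of $H$ are exactly those maximum independent sets of $G$ that avoid $w$; as all of the latter contain $v$, so does every maximum independent set of $H$, giving the independence condition. For the maximum degree condition, deleting the non-neighbor $w$ leaves $\deg_H(v) = \deg_G(v) = \Delta(G)$, while every other vertex $u$ satisfies $\deg_H(u) \le \deg_G(u) \le \Delta(G)$, so $v$ still has maximum degree in $H$. Finally, $H$ has strictly fewer maximum independent sets than $G$, having lost $I_1$ while every surviving one was already a maximum independent set of $G$; thus the induction hypothesis applies to $H$ and yields an induced subgraph of $H$, hence of $G$, with a unique maximum independent set in which $v$ has MDI conditions.

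The argument is short, so the only real care needed is the bookkeeping that the deletion creates no \emph{new} maximum independent set and cannot break the degree condition. The first point is handled by the observation that an independent set of size $\alpha(G)$ in $H$ is automatically a maximum independent set of $G$, so passing to $H$ only removes maximum independent sets and never adds them. The second hinges entirely on $w$ being a non-neighbor of $v$, which is precisely why $w$ must be selected from inside a maximum independent set rather than arbitrarily; this is the step I would be most careful to justify explicitly.
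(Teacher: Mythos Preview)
Your proof is correct and rests on the same idea as the paper's: remove vertices lying in the ``other'' maximum independent sets, which are automatically non-neighbors of $v$, so $v$'s degree is preserved while the unwanted independent sets disappear. The only difference is packaging---the paper deletes $\bigcup_{i\ge 2} I_i \setminus I_1$ in a single stroke rather than one vertex at a time by induction---and your version is actually more explicit about why no new maximum independent sets can appear.
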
 
    \begin{proof}
        Let $v$ belong to maximum independent sets $I_1,I_2,...,I_n$. Then we can consider the subgraph induced by deleting  $\bigcup_{i=2}^{n}I_i\setminus I_1$. The maximum degree condition is not violated since none of the deleted edges were adjacent to $v$, and there is exactly one maximum independent set in the induced subgraph. 
    \end{proof}
    \vspace{1 cm}
    
Because of Lemma \ref{UniqueIndLemma}, we will now only consider a graph $G$ with one maximum independent set $I$ including a vertex $v$ such that $v$ has MDI conditions.

\begin{lem}\label{OnlyN(v)} Let $x$ be a vertex such that $x\notin N(v)\cup I$ where $I$ is the lone independent set. Then $G\setminus\{x\}$ has MDI conditions as well. 
\end{lem}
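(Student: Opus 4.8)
The plan is to show that the \emph{same} vertex $v$ continues to satisfy the MDI conditions in $G\setminus\{x\}$; since a graph having MDI conditions only requires the existence of one such vertex, this suffices. I would verify the maximum degree condition and the independence condition separately, using the two halves of the hypothesis $x\notin N(v)\cup I$ — namely $x\notin N(v)$ for the degree and $x\notin I$ for the independence — together with the standing assumption that $I$ is the unique maximum independent set of $G$.

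For the maximum degree condition, observe that since $x\notin N(v)$, deleting $x$ leaves $\deg(v)$ unchanged, whereas the degree of every other vertex can only drop (by one if it was adjacent to $x$, otherwise not at all). As $v$ had maximum degree in $G$, it therefore still has maximum degree in $G\setminus\{x\}$.

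For the independence condition, the first step is to pin down $\alpha(G\setminus\{x\})$. Deleting a single vertex cannot increase the independence number, so $\alpha(G\setminus\{x\})\le\alpha(G)$; on the other hand, since $x\notin I$, the set $I$ survives intact as an independent set in $G\setminus\{x\}$, giving $\alpha(G\setminus\{x\})\ge|I|=\alpha(G)$. Hence $\alpha(G\setminus\{x\})=\alpha(G)$. Now suppose $J$ is any maximum independent set of $G\setminus\{x\}$. Then $J$ is an independent set of $G$ (it spans no edge, and $x\notin J$, so its induced subgraph is the same viewed inside $G$ or inside $G\setminus\{x\}$) of size $\alpha(G)$, so $J$ is a maximum independent set of $G$; by uniqueness, $J=I$, and in particular $v\in J$. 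Thus $v$ lies in every maximum independent set of $G\setminus\{x\}$.

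The argument is short, and I do not anticipate a serious obstacle; the one point requiring care is that the independence half of the proof leans entirely on the reduction to a \emph{unique} maximum independent set provided by Lemma \ref{UniqueIndLemma}. Without uniqueness, one could only conclude that some maximum independent set of $G\setminus\{x\}$ is a maximum independent set of $G$ containing $v$, which would not preclude other maximum independent sets of $G\setminus\{x\}$ that omit $v$. It is precisely the uniqueness hypothesis that upgrades ``$I$ is still maximum'' to ``$I$ is the only maximum independent set,'' thereby closing the independence condition.
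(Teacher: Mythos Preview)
Your proof is correct and follows the same approach as the paper's own argument: verify separately that $v$ retains maximum degree (since $x\notin N(v)$) and that $v$ lies in every maximum independent set of $G\setminus\{x\}$ (since $x\notin I$ and $I$ is unique). Your version is more carefully written---in particular, you make explicit how uniqueness of $I$ forces every maximum independent set of $G\setminus\{x\}$ to equal $I$, a step the paper compresses into ``the independent set is unaffected.''
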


\begin{proof}
Deleting $x$ does not change the degree of $v$ and thus the maximum degree condition is unaffected. Furthermore, since $x$ is not in $I$, the independent set is unaffected as well. Thus $v$ still has MDI conditions in $G\setminus\{x\}$. 
\end{proof}

\vspace{1 cm}

If $\alpha(G)=1$, $G$ must be a clique, and if there is only one maximum independent set, then $G$ must be an isolated vertex. Furthermore, if $\alpha(G)=2$ with maximum independent set $\{u,v\}$, then $N(v)=N(u)$ must form a clique and thus every element of $N(v)$ must have strictly larger degree than both $u$ and $v$. Since we require an element of the maximum independent set to have maximum degree, $N(v)$ must be empty and $G$ must be the graph of two isolated vertices. Hence, if $G$ has MDI conditions and $\alpha(G)\leq 2$, then every application of the Maxine heuristic vacuously produces a set of size $\alpha$. 

Thus we will now assume that the size of $I$ is 3 and that $I=\{u,v,w\}$ where $v$ is the vertex with MDI conditions and $I'=I\setminus\{v\}$. Note that if $x\in N(v)$ where $v$ has MDI conditions and $x$ is not adjacent to any other element in $I$, the maximum independent set,  then we have another maximum independent set $(I\setminus\{v\})\cup\{x\}$. From \ref{OnlyN(v)} we can then delete $x$ and retain conditions on $G$. Thus we only need to consider $N(v)\cup I$. We will then partition $N(v)$ into $Q_u$ and $Q_w$ as the vertices in $N(v)$ whose neighbors in $I'$ are only $u$ and $w$ respectively. We will call $Q=Q_u\cup Q_w$. Let $N$ be the set of vertices in $N(v)$ that are adjacent to both $u$ and $w$.  Since the independence number of $G$ must be 3 and $I$ is the unique independent set of size 3, we have that $Q_u$ and $Q_w$ must have independence number at most 1; hence $Q_u$ and $Q_w$ are cliques, since otherwise there would exist another independent set of size 3. Similarly, $N$ must have independence number at most 2. Then since $G$ must be $P_5^*$free, we have that $Q$ must form a clique as every vertex in $Q_u$ must dominate $Q_w$ and vice versa as otherwise there exists $q_u\in Q_u$ and $q_w\in Q_w$ non-adjacent; hence $\{u, q_u, v, q_w, w\}$ induce $P_5$ with $v$ as the center vertex.

\begin{thm}\label{Ind=3} Let $G$ have MDI conditions with $\alpha=3$. Then $G$ has at least one of the following induced subgraphs where $Q'$ is a subset of $Q$ and $N'$ a subset of $N$:
\begin{enumerate}
    \item $|Q'|=0$, $G[N']\cong\overline{C_n}$. 
    \item $|Q'|=1$, $G[N'\cup Q']\cong\overline{C_n}$.
    \item $|Q'|=2$, $G[N'\cup Q']\cong\overline{P_n}$ where the elements of $Q$ are the endpoints of $P_n$ in the complement.  
\end{enumerate}
\end{thm}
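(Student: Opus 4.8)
The plan is to pass to the complement and reduce the whole statement to a question about one auxiliary graph. Write $B$ for the complement of $G$ restricted to $N\cup Q$, i.e.\ $B=\overline{G}[N\cup Q]$, so that an induced $\overline{C_n}$ (resp.\ $\overline{P_n}$) of $G$ supported on a subset of $N\cup Q$ is exactly an induced cycle (resp.\ path) of $B$. I would first record three structural facts about $B$. First, $B$ is triangle-free: since $v$ lies in every maximum independent set and $\alpha(G)=3$, the graph $G-v$ has no independent set of size $3$, hence $\alpha(G[N\cup Q])\le 2$ and its complement $B$ has no triangle. Second, $Q$ is independent in $B$, because $Q$ is a clique of $G$ (already established). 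Third --- and this is where the maximum-degree half of the MDI hypothesis finally does work --- I would use $\deg(v)=|Q|+|N|$ together with $\deg(x)\le\deg(v)$ to bound degrees in $B$. A vertex $n\in N$ is adjacent in $G$ to $u,v,w$, so a direct count gives that $n$ has at least two non-neighbours in $(N\setminus\{n\})\cup Q$, i.e.\ $\deg_B(n)\ge 2$; and each $q\in Q$ is adjacent in $G$ to $v$, to its partner in $\{u,w\}$, and to all of $Q\setminus\{q\}$, forcing at least one non-neighbour in $N$, i.e.\ $\deg_B(q)\ge 1$, with that $B$-neighbour necessarily lying in $N$ since $Q$ is independent in $B$.

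With these facts in hand I would split on whether $B$ contains a cycle. If it does, take a shortest cycle $C$: being shortest it is induced, and triangle-freeness forces its length to be at least $4$. Now count the vertices of $Q$ lying on $C$. If there are none, $C$ lives in $N$ and yields conclusion (1); if there is exactly one, $C$ yields conclusion (2). If there are two or more, list the $Q$-vertices of $C$ in cyclic order and take two consecutive ones $q_1,q_2$; since $Q$ is independent they are non-adjacent, so the arc of $C$ between them has nonempty interior lying entirely in $N$, and as a subpath of an induced cycle it is itself an induced path with endpoints $q_1,q_2\in Q$ and interior in $N$ --- exactly conclusion (3).

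If instead $B$ has no cycle, then $B$ is a forest, and here the degree facts pay off. Every leaf of $B$ has degree $1$, but $\deg_B(n)\ge 2$ for $n\in N$, so every leaf lies in $Q$; hence any component containing an edge is a tree with at least two leaves, all in $Q$. Starting from such a leaf $q_1$ and walking along the tree --- which can always be continued at an $N$-vertex because $\deg_B\ge 2$ there, and which cannot repeat a vertex --- I reach a first vertex $q_2\in Q$. The tree-path from $q_1$ to $q_2$ is induced, has both endpoints in $Q$ and all interior vertices in $N$, giving conclusion (3) once more.

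The steps I expect to be the real content are the degree computation establishing $\deg_B(n)\ge 2$ and $\deg_B(q)\ge 1$ --- this is the only place the maximum-degree condition is genuinely used, and it is precisely what prevents cycles and paths with too many $Q$-vertices --- together with the forest case, whose crux is the observation that all leaves must be $Q$-vertices. The cycle-to-path extraction and the case split on the number of $Q$-vertices are then routine. The single degenerate situation $N=Q=\emptyset$ (which forces $G\cong\overline{K_3}$) falls outside all three conclusions but is trivial for the heuristic, so I would dispose of it separately at the outset.
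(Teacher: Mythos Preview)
Your proof is correct, and it is a genuinely different route from the paper's. The paper argues by cases on $|Q|\in\{0,1,2,>2\}$: in each case it hand-builds a sequence of non-neighbours starting from $Q$-vertices and tracks which earlier vertices must be dominated, terminating in the desired $\overline{C_n}$ or $\overline{P_n}$; the $|Q|>2$ case is then handled by a further induction on $|Q|$. Your approach replaces all of this with one structural picture: work in $B=\overline{G}[N\cup Q]$, record that $B$ is triangle-free with $Q$ independent and $\deg_B\ge 2$ on $N$, $\deg_B\ge 1$ on $Q$, and then split on whether $B$ contains a cycle. The cycle/forest dichotomy handles every value of $|Q|$ at once, and the shortest-cycle plus consecutive-$Q$-vertex trick is a clean way to extract conclusion (3) that the paper never isolates. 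What the paper's approach buys is slightly more explicit witnesses in each case, but at the cost of four separate and increasingly intricate arguments; your argument is shorter, more conceptual, and makes transparent exactly where each half of the MDI hypothesis is used (independence $\Rightarrow$ triangle-free; maximum degree $\Rightarrow$ the $\deg_B$ lower bounds). Your handling of the degenerate case $N=Q=\emptyset$ is also more honest than the paper's, which simply asserts ``the result follows immediately'' when in fact none of the three listed conclusions applies to $\overline{K_3}$.
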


\begin{proof}

We will first consider the case where $|Q|=0$. First note that if $|N|=0$, then $N(v)$ is empty and $G$ is only the independent set and the result follows immediately. Thus we will assume that $N$ is non-empty.  We have that every vertex in $N$ has two non-neighbors in $N$ as $Q$ is empty and every vertex in $N$ is also adjacent to $u$, $v$, and $w$, otherwise $v$ would not have maximum degree as $N(v)=N\cup Q$. We can then arrange the non-neighbors into one or more disjoint cycle complements. Consider a smallest cycle complement, and label its vertices $x_0,\ldots, x_{m-1}$ where $x_i$ is non-adjacent to both $x_{i+1}$ and $x_{i-1}$ modulo $m$. If there exists an $x_i$ that does not dominate the rest of the cycle complement, then we have a smaller cycle complement which is a contradiction. Thus we have that $x_i$ dominates the rest of the cycle complement for every $i$ and thus we have $G[N']\cong \overline{C_m}$ where $N'$ is the vertex set of the cycle complement.

We will next consider the case where $|Q|=1$. We will call $q$ the lone vertex in $Q$. If $|N|=0$, then $q$ has larger degree than $v$, which is a contradiction so we will assume that $N$ is non-empty.  Note that every vertex in $N$ has to have at least 2 non-neighbors in $N\cup Q$ otherwise $v$ is not of maximum degree, as every vertex in $N$ is also adjacent to $u$, $v$, and $w$.  If $q$ dominates $N$ then $\deg(q)>\deg(v)$ which is a contradiction. Thus there exists a non-neighbor of $q$ in $N$; call it $x_0$, and call the other guaranteed non-neighbor of $x_0$, $x_1$. Similarly, $x_1$ is guaranteed to have another non-neighbor in $N\cup Q$ as $x_1\in N$ and must have at least two non-neighbors in $N\cup Q$. If this other non-neighbor is $q$ then we have that $\{q,x_0,x_1\}$ induce $\overline{C_3}$ and we are done. Thus we will assume that the other non-neighbor is in $N$, call it $x_2$. Inductively this creates a sequence of non-neighbors in $N$, $\{x_i\}$, as each $x_i$ must be adjacent to $q$ otherwise we are done as $\overline{C_{i+1}}$ is induced on $q\cup x_1\cup\cdots\cup x_i$. Furthermore each $x_i$ must be adjacent to $\{x_0,\ldots,x_{i-2}\}$ otherwise we have an induced copy of $\overline{C_n}$ in $N$ for some $n$.  Since we have a finite graph, this sequence must terminate at $x_m$ for some $m$, and thus we have that $x_m$ must be non-adjacent to either $q$ or some vertex in $\{x_0,\ldots,x_{m-2}\}$ giving the result.

Finally we will show the result if $|Q|\geq2$. We will proceed by induction on the size of $Q$. 
We will now consider the base case where $|Q|=2$, calling the 2 vertices $q_1,q_2$. Note that $q_1,q_2$ are adjacent as $Q$ forms a clique. Similar to the case $|Q|=1$, if $N$ is empty then $q_1$ has strictly larger degree than $v$ which is a contradiction. Thus we will assume that $N$ is non-empty. Each of the vertices has at least one non-neighbor in $N$; if they have the same non-neighbor then those three vertices induce the desired $\overline{P_3}$ and we are done. Thus we will assume that they have different non-neighbors, call them $x_1$ and $x_2$ respectively. If $x_1\nsim x_2$, then the four vertices induce the desired $\overline{P_4}$ and we are done, so assume that $x_1\sim x_2$. Each of these vertices has another non-neighbor in $N$; if they share a non-neighbor then the five vertices induce the desired $\overline{P_5}$, so assume that $x_1$ and $x_2$ have different non-neighbors call them $x_3$ and $x_4$ respectively. Inductively, we have that the pair of vertices $x_{2i},x_{2i+1}$ are the new non-neighbors of $x_{2i-2}$ and $x_{2i-1}$.  Note that $x_{2i}$ and $x_{2i+1}$  must be adjacent to $Q$ otherwise there is an induced complement of a cycle and we are done. Furthermore $x_{2i}$ must be adjacent to each $x_j$ for $j$ even and $x_{2i+1}$ must be adjacent to each $x_j$ for $j$ odd, otherwise we have an induced complement of a cycle in $N$. Then $x_{2i}$ must be adjacent to each $x_j$ for $j$ odd, and $x_{2i+1}$ must be adjacent to each $x_j$ for $j$ even, otherwise we have the desired induced complement of a path. We thus have that both $x_{2i}$ and $x_{2i+1}$ must have another non-neighbor in $N$. Since we have a finite graph, this process must terminate, yielding the result.

We will now show that if $|Q|>2$, $G$ has one of the desired induced subgraphs above. We will proceed by induction on $|Q|$, noting that the base case of $|Q|=2$ is done above. Assume the result is true for $|Q|<k$ and consider the case with $|Q|=k$. We will label the vertices of $Q$, $\{q_1,q_2,\ldots, q_k\}$. Each of these has a non-neighbor in $N$, call it $x_i$ for each $q_i$. Note that these are distinct otherwise we have an induced copy of $\overline{P_3}$ with 2 elements of $Q$ has endpoints in the complement. Furthermore $q_i\sim x_j$ for all $i\neq j$ as otherwise we have an induced $\overline{P_4}$. Then there exists another non-neighbor of $x_1$ in $N$, call it $y_1$. We have that 
\begin{itemize}
    \item $y_1\sim q_1$, otherwise $\{q_1,x_1,y_1\}$ induce $\overline{C_3}$. 
    
    \item $y_1\sim q_j$ for all $j>1$ otherwise $\{q_1,x_1,y_1,q_j\}$ induce $\overline{P_4}$. 
    
    \item $y_1\sim x_j$ for all $j>1$, otherwise $\{q_1,x_1,y_1,x_j,q_j\}$ induce $\overline{P_5}$. 
\end{itemize}

We then have that $y_1$ must have another non-neighbor in $N$, call it $y_2$. Inductively let $y_k$ be the other non-neighbor of $y_{k-1}$ where each $y_i$ for $1\leq i<k$ dominates all preceding vertices except $y_{i-1}$. Then we have that 
\begin{itemize}
    \item $y_k\sim q_1$, otherwise $\{q_1,x_1,y_1,\ldots,y_{k}\}$ induce $\overline{C_{k+2}}$.
    
    \item $y_k\sim q_j$ for all $j>1$, otherwise $\{q_1,x_1,y_1,\ldots,y_{k},q_j\}$ induce $\overline{P_{k+3}}$.
    
    \item $y_k\sim x_1$, otherwise $\{x_1,y_1,\ldots,y_{k}\}$ induce $\overline{C_{k+1}}$.
    
    \item $y_k\sim x_j$ for all $j>1$, otherwise $\{q_1,x_1,y_1,\ldots,y_{k},x_j,q_j\}$ induce $\overline{P_{k+4}}$
    
    \item $y_k\sim y_i$ for all $i<k$ otherwise inductively there is an induced complement of a cycle. 
\end{itemize}
Thus $y_k$ has another non-neighbor in $N$. Since our graph is finite, this process must terminate and the result holds.

\end{proof}

We will now extend the result to a graph with independence number greater than 3.

\begin{thm}\label{Ind=k} Let $G$ have MDI conditions with $\alpha=k$ such that $k>3$. Then the result from \ref{Ind=3} holds as well.
\end{thm}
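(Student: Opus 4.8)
The plan is to avoid re-proving anything from scratch and instead to show that the entire case analysis in the proof of Theorem \ref{Ind=3} never really used the hypothesis $\alpha = 3$: it only used a partition of $N(v)$ into a clique of ``private'' neighbors together with a set of neighbors each having at least two non-neighbors. So first I would carry out, for general $k$, exactly the reductions made before Theorem \ref{Ind=3}. By Lemma \ref{UniqueIndLemma} I may assume there is a unique maximum independent set $I = \{v, u_1, \dots, u_{k-1}\}$ with $v$ the vertex having MDI conditions, and by Lemma \ref{OnlyN(v)} (together with the observation that any neighbor of $v$ nonadjacent to all of $I' := I \setminus \{v\}$ would produce a maximum independent set avoiding $v$) I may assume $V(G) = \{v\} \cup I' \cup N(v)$ and that every $x \in N(v)$ meets $I'$. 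Writing $S_x = N(x) \cap I'$, I would then partition $N(v) = N \cup Q$ with $Q = \{x : |S_x| = 1\}$ and $N = \{x : |S_x| \ge 2\}$; this specializes to the old $N$, $Q_u$, $Q_w$ precisely when $k = 3$. I would deliberately \emph{not} try to reduce to the $\alpha = 3$ case by deleting all but two of the $u_i$, since removing independent-set vertices can create a maximum independent set missing $v$ and thereby destroy the independence condition.

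The substance is then to re-establish the two structural facts that the argument of Theorem \ref{Ind=3} actually consumes. The first is a degree count: since $v$ has maximum degree and all of its neighbors lie in $N(v)$, for every $x \in N(v)$ we have $1 + |S_x| + \deg_{N(v)}(x) = \deg(x) \le \deg(v) = |N(v)|$, so $x$ has at least $|S_x|$ non-neighbors inside $N(v)$. Hence every vertex of $N$ has at least two non-neighbors in $N(v)$ and every vertex of $Q$ has at least one, which are exactly the counts used to start and to continue the non-neighbor chains. The second fact is that $Q$ is a clique. For $q, q' \in Q$ with $S_q = S_{q'} = \{u_i\}$, nonadjacency would make $\{q, q'\} \cup (I' \setminus \{u_i\})$ an independent set of size $k$ other than $I$, contradicting uniqueness; for $S_q = \{u_i\}$ and $S_{q'} = \{u_j\}$ with $i \ne j$, nonadjacency would make $\{u_i, q, v, q', u_j\}$ an induced $P_5$ centered at $v$, contradicting $P_5^*$-freeness. (Equivalently, $P_5^*$-freeness forces $S_b \subseteq S_c$ or $S_c \subseteq S_b$ for every nonadjacent pair $b, c \in N(v)$, which is the clean statement I would extract and reuse.)

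With these two facts in hand I would observe that the proof of Theorem \ref{Ind=3} goes through essentially verbatim: each of its three cases ($|Q| = 0$, $|Q| = 1$, $|Q| \ge 2$) builds a $\overline{C_m}$ or a $\overline{P_m}$ inside $N(v)$ using only non-neighbor existence, the clique structure of $Q$, and, in the last case, that the two chain endpoints come from $Q$; no step refers to the size of $I'$. Since the resulting complement of a cycle or path is induced in $N(v) \subseteq V(G)$, it is an induced subgraph of $G$ of exactly the advertised type. The main obstacle, and the part I would check most carefully, is not any new computation but the claim of verbatim transfer: I must confirm that the earlier argument never silently used $|I'| = 2$ or exact degrees. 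In particular I would verify that ``arranging the non-neighbors into cycle complements'' needs only that the complement of $G[N]$ have minimum degree at least two, so that a shortest (hence chordless, hence induced) cycle exists, and that the inductive chain in the $|Q| \ge 2$ case needs only that each newly reached vertex retain an unused non-neighbor. Once that bookkeeping is verified, the conclusion of Theorem \ref{Ind=3} follows for every $k > 3$.
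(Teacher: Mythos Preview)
Your approach is correct and takes a genuinely different route from the paper. You extract the two facts the Theorem~\ref{Ind=3} argument actually consumes---that $Q$ is a clique and that every $x\in N(v)$ has at least $|S_x|$ non-neighbours inside $N(v)$---prove them for arbitrary $k$, and then invoke the earlier case analysis wholesale with $Q:=Q_1$ and $N:=\bigcup_{i\ge 2}Q_i$. The paper instead stratifies $N(v)$ by the exact value of $|S_x|$ into layers $Q_1,\dots,Q_{k-1}$, shows by downward induction that $Q_i=\varnothing$ for every $i\ge 3$ (each nonempty layer would already force a $\overline{C_n}$ or $\overline{P_n}$ inside some $G_{u,w}$), and then runs a separate endgame on $Q_1\cup Q_2$. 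Your route is shorter and more conceptual. What the paper's longer route buys is localisation: by always working inside a fixed $G_{u,w}$, it guarantees that every vertex of the discovered $N'$ is adjacent to one specific pair $u,w\in I'$, so the induced $\overline{C_n}$ or $\overline{P_n}$ extends to a full member of the family $\mathcal F$ on $\{v,u,w\}\cup N'\cup Q'$. Your argument produces the complement of a cycle or path inside $N(v)$, but the vertices of your $N'\subseteq\bigcup_{i\ge 2}Q_i$ need not share a common pair of neighbours in $I'$; your nesting observation $S_b\subseteq S_c$ or $S_c\subseteq S_b$ only constrains \emph{non-adjacent} pairs, i.e.\ consecutive vertices along the complement cycle. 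If the downstream Corollary (which needs an induced member of $\mathcal F$, not merely an induced $\overline{C_n}$) is the target, you still owe one short step pinning down such a pair $u,w$.
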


\begin{proof}
We will assume the contrary, that there exists such a graph without the desired induced subgraphs and derive a contradiction. 

From Lemma \ref{UniqueIndLemma} we have that $G$ has one maximum independent set with $v$ a vertex with MDI conditions. First call $I$ the lone independent set, and $I'=I\setminus\{v\}$. Furthermore, we will use the notation that a set $A\subseteq N(v)$ induces a subgraph on $G_{ij}$, to mean $G[\{v,A,i,j\}]$ where $i,j$ are elements of $I'$. Then call $Q_{i}\subseteq N(v)$ the vertices that are adjacent to exactly $i$ members of $I'$. Then $\{Q_i\}_{i=1}^{k-1}$ partition $N(v)$, using Lemma \ref{OnlyN(v)}. Also note, that in order for $G$ to have MDI conditions, every vertex in $Q_i$ must have $i$ non-neighbors in $N(v)$, otherwise $v$ would not have maximum degree. We will first show that $Q_{k-1}$ must be empty.

Let $q\in Q_{k-1}$. We will show that $q$ must have at most one non-neighbor in $N(v)\setminus Q_{k-1}$. Suppose that $q$ has two such non-neighbors; $x$ and $y$.

First suppose $x\sim y$. If $x$ and $y$ have distinct neighbors in $I'$, call them $u$ and $w$ respectively, then $\{q,x,y\}$ induce $\overline{P_3}$ in $G_{u,w}$. Otherwise, without loss of generality, $(N(x)\cap I')\subseteq (N(y)\cap I')$, and we must have that $N(x)\cap I'$ is non-empty, so it contains an element $u$, and $(N(y)\cap I')^c$ is non-empty as $y\notin Q_{k-1}$, and thus $w\in (N(y)\cap I')^c $. We then have that, again, $\{q,x,y\}$ induce $\overline{P_3}$ in $G_{u,w}$.

Then suppose that $x\nsim y$. We must have that $x$ and $y$ do not have any distinct neighbors in $I'$, say $a$ and $b$, as otherwise $\{x,v,y,a,b\}$ would induce $P_5$. Then $x$ and $y$ share a neighbor in $I'$, call it $u$ and note that both $x,y$ cannot belong to $Q_1$, as $Q_1$ forms a clique. Thus, without loss of generality, we can say that $y$ has another neighbor, $w$, in $I'$, and thus $\{q,x,y\}$ induce $\overline{C_3}$ in $G_{u,w}$.

We thus have that, for each $q\in Q_{k-1}$, $q$ must have at most one non-neighbor in $N(v)\setminus Q_{k-1}$, and thus must have at least 2 non-neighbors in $Q_{k-1}$. As in the proof of the $\alpha=3$ case, we can arrange a smallest cycle complement of non-neighbors and thus we have an induced $\overline{C_n}$ in $G_{u,w}$ where $u,w$ are any two members of $I'$. This is a contradiction, and thus $Q_{k-1}$ must be empty.

We will then proceed by induction to show that $Q_i$ is empty for $3\leq i\leq k-1$. We will assume that $Q_{i}$ is empty for all $i>\ell$, and we will show that $Q_{\ell}$ is empty as well.

let $q\in Q_{\ell}$, and assume that $q$ has two non-neighbors in $N(v)\setminus Q_{\ell}$, call them $x$ and $y$. If any pair of $\{x,y,q\}$ have distinct neighbors in $I'$, then we have an induced $P_5$, as seen above. Thus we must have that without loss of generality, $(N(x)\cap I')\subseteq (N(y)\cap I')$, and since $q$ has the most neighbors in $I'$, $(N(y)\cap I')\subseteq (N(q)\cap I')$. Then we argue, in the same way as in the base case of $Q_{k-1}$, that $q$ can only have at most one non-neighbor in $N(v)\setminus Q_{\ell}$. Thus, $q$ has at least $\ell-1$ non-neighbors in $Q_{\ell}$ as $Q_i$ is empty for all $i>\ell$, and as above this means that we have an induced $\overline{C_n}$, a contradiction. Thus we have that $Q_{\ell}$ must be empty. Hence by induction we have that $Q_i$ is empty for all $i>2$.

Note that $N(v)$ must be non-empty, as we cannot have an edgeless graph, and $Q_2$ cannot be empty as $Q_1$ forms a clique, and each element of $Q_1$ must have at least one non-neighbor in $N(v)$. Then let $q\in Q_2$. If $q$ has two non-neighbors in $Q_1$, adjacent to $u$ and $w$ respectively in $I'$, then $q$ must also be adjacent to $u,w$, otherwise we have an induced $P_5$. Thus the three vertices induce $\overline{P_2}$ in $G_{u,w}$. Then assume that $q$ has exactly one non-neighbor in $Q_1$, call it $x$ and a non-neighbor in $Q_2$, call it $y$. We must have that $q,y$ share the same neighbors in $I'$, otherwise we have an induced $P_5$, and thus the neighbor of $x$ in $I'$ is shared by both $q,y$. We then have that if $x\nsim y$, we have that $\{x,y,q\}$ induce $\overline{C_3}$. We will thus assume that $x\sim y$.

Then if all $q\in Q_2$ have 2 non-neighbors in $Q_2$, we must have an induced copy of $\overline{C_n}$ in $Q_2$. Suppose then that there are 2 vertices in $Q_2$, $q,q'$ that have a non-neighbor in $Q_1$, and choose these vertices such that the distance between them in $Q_2^c$ is as small as possible. Note that there must exist a chain of vertices in $Q_2$ such that $q\nsim q_1\nsim q_2\nsim\cdots\nsim q'$, such that $q_i$ does not have a non-neighbor in $Q_1$. Furthermore, $q,q',q_i$ must share the same neighbors in $I'$, otherwise we have an induced copy of $P_5$. If $q,q'$ have the same non-neighbor in $Q_1$, call it $x$, then $\{x,q,q',q_1,\ldots\}$ induce $\overline{C_n}$. If $q,q'$ have different non-neighbors, $x$ and $x'$ in $Q_1$, then $\{x,x',q,q',q_1,\ldots\}$ induce $\overline{P_n}$. This is a contradiction, and thus for every graph $G$ with conditions and $\alpha=k>3$, we have the result.  
\end{proof}

For ease, we will call the families of induced subgraphs in \ref{Ind=3} $\mathcal{F}$. We wanted to improve the $C_4$ condition introduced by Barrus and Molnar as $C_4$ itself was not MDI. By construction, each graph in $\mathcal{F}$ is itself MDI alongside $P_5$. We then have the immediate corollary, 

\begin{cor}
The Maxine heuristic always produces a maximum independent set when applied to a $\{\mathcal{F},P_5\}$-free graph.
\end{cor}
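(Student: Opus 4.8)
The plan is to show that a failure of the heuristic on $G$ would force some induced subgraph of $G$ to have MDI conditions, and then to rule this out using the structure theorems. First I would record the dynamics of a single run: writing $G=G_0\supseteq G_1\supseteq\cdots\supseteq G_T$ for the graphs produced by one application, with $G_{t+1}=G_t-x_t$ for a maximum-degree vertex $x_t$ of $G_t$ and $G_T$ the terminal independent set of size $M$, I would invoke the elementary facts that deleting a vertex never raises the independence number and lowers it by at most one. Thus $\alpha(G_{t+1})\in\{\alpha(G_t),\alpha(G_t)-1\}$, and since $\alpha(G_T)=M$, we get $M=\alpha(G)$ precisely when no deletion strictly lowers $\alpha$. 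This also disposes of the ``any application'' quantifier: a drop in $\alpha$ is irrecoverable, so it is enough to prove that no single step can lower $\alpha$, no matter which maximum-degree vertices are chosen.

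Next I would translate a losing step into MDI conditions. Deleting $x_t$ lowers $\alpha(G_t)$ exactly when $x_t$ lies in every maximum independent set of $G_t$; as $x_t$ is chosen of maximum degree, this happens precisely when $G_t$ has a maximum-degree vertex in every maximum independent set, i.e.\ when $G_t$ has MDI conditions. Every $G_t$ is an induced subgraph of $G$, and being $\{\mathcal{F},P_5\}$-free is hereditary, so each $G_t$ is again $\{\mathcal{F},P_5\}$-free; in particular each $G_t$ is $P_5^*$-free, since a graph with no induced $P_5$ a fortiori has no induced $P_5$ whose center has MDI conditions. If some losing $G_t$ had $\alpha(G_t)\ge 3$, Theorems \ref{Ind=3} and \ref{Ind=k} would then produce an induced member of $\mathcal{F}$ inside $G_t$, hence inside $G$, contradicting $\mathcal{F}$-freeness. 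Combining this with the first paragraph, no step lowers $\alpha$ and therefore $M=\alpha(G)$.

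The main obstacle is the range not covered by Theorems \ref{Ind=3} and \ref{Ind=k}, namely a losing graph $G_t$ with $\alpha(G_t)\le 2$, which I would handle directly. Since a deletion step is performed only while $G_t$ still has an edge, the deleted vertex $x_t$ has positive degree, so it suffices to show that no graph containing an edge can have a maximum-degree vertex of positive degree lying in every maximum independent set while $\alpha\le 2$. For $\alpha=1$ the graph is a clique whose maximum independent sets are single vertices, so a vertex common to all of them is the only vertex and cannot have positive degree. For $\alpha=2$, a vertex $x$ in every maximum independent set forces $V(G_t)\setminus\{x\}$ to be a clique (two nonadjacent vertices outside $x$ would give a maximum independent set avoiding $x$), whence any neighbor of $x$ is adjacent to all remaining vertices and so has strictly larger degree than $x$, contradicting the maximality of $\deg(x)$. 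With these small cases settled, the genuinely delicate points are only the heredity of $\{\mathcal{F},P_5\}$-freeness, the implication that $P_5$-free graphs are $P_5^*$-free, and the equivalence between a drop in $\alpha$ and the appearance of MDI conditions in the current graph.
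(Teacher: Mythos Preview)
Your proposal is correct and follows the same logic the paper leaves implicit when it calls the corollary ``immediate'': a losing Maxine step forces MDI conditions in the current (induced) subgraph, $\{\mathcal{F},P_5\}$-freeness is hereditary and implies $P_5^*$-freeness, and then Theorems~\ref{Ind=3}--\ref{Ind=k} (for $\alpha\ge 3$) together with the direct $\alpha\le 2$ analysis the paper carries out just before Theorem~\ref{Ind=3} finish the job. Your write-up simply makes explicit the heredity step and the translation between ``$\alpha$ drops'' and ``MDI conditions hold,'' which the paper takes for granted.
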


\section{Open Questions} \label{sec: open questions}

Barrus and Molnar used their results to show that if a graph is $\{P_5, 4-\text{pan}, K_{2,3}, K^+_{2,3}, \text{kite}, 2P_3, P_3+ K_3, \text{stool}, \text{co-domino}\}$-free, then $R(G)=\alpha(G)$.\cite{BarrusMolnar} It can be expected that this class of graphs can be expanded with the strengthened conditions shown in this paper. We pose the following open questions/problems:
\begin{itemize}
    \item Can we fully classify the graphs in which the Maxine heuristic produces a maximum independent set. 
    
    \item What other conditions, other than forbidding MDI conditions, can be considered to guarantee that the Maxine heuristic produces a maximum independent set?
    
    \item Can we fully classify the graphs in which the Maxine heuristic produces a graph with an independent set the same size as the residue? Note that graphs with the Havel-Hakimi property introduced in \cite{BarrusMolnar} are a subset of these graphs. 

    \item Can we fully classify the graphs in which the residue equals the independence number?

\end{itemize}

\end{document}